\newtheorem{theorem}{Theorem}[section]
\newtheorem{lemma}[theorem]{Lemma}
\newtheorem{proposition}[theorem]{Proposition}
\newtheorem{remark}[theorem]{Remark}
\newtheorem{definition}[theorem]{Definition}
\DeclareMathOperator{\tr}{trace}
\newcommand{\nc}{\newcommand} 
\nc{\cH}{{\mathcal H}}
\nc{\cA}{{\mathcal A}}
\nc{\cG}{{\mathcal G}}
\nc{\cC}{{\mathcal C}}
\nc{\cO}{{\mathcal O}}
\nc{\cI}{{\mathcal I}}
\nc{\cB}{{\mathcal B}}
\nc{\cY}{{\mathcal Y}}
\nc{\cK}{{\mathcal K}} 
\nc{\cX}{{\mathcal X}}
\nc{\cS}{{\mathcal S}}
\nc{\cE}{{\mathcal E}}
\nc{\cF}{{\mathcal F}}
\nc{\cZ}{{\mathcal Z}}
\nc{\cQ}{{\mathcal Q}}
\nc{\cN}{{\mathcal N}}
\nc{\cP}{{\mathcal P}}
\nc{\cL}{{\mathcal L}}
\nc{\cM}{{\mathcal M}}
\nc{\cT}{{\mathcal T}}
\nc{\cW}{{\mathcal W}}
\nc{\cU}{{\mathcal U}}
\nc{\cJ}{{\mathcal J}}
\nc{\cV}{{\mathcal V}}
\nc{\bH}{{\mathbb H}}
\nc{\bA}{{\mathbb A}}
\nc{\bG}{{\mathbb G}}
\nc{\bC}{{\mathbb C}}
\nc{\bO}{{\mathbb O}}
\nc{\bI}{{\mathbb I}}
\nc{\bB}{{\mathbb B}}
\nc{\bY}{{\mathbb Y}}
\nc{\bK}{{\mathbb K}} 
\nc{\bX}{{\mathbb X}}
\nc{\bS}{{\mathbb S}}
\nc{\bE}{{\mathbb E}}
\nc{\bF}{{\mathbb F}}
\nc{\bZ}{{\mathbb Z}}
\nc{\bQ}{{\mathbb Q}}
\nc{\bN}{{\mathbb N}}
\nc{\bP}{{\mathbb P}}
\nc{\bL}{{\mathbb L}}
\nc{\bM}{{\mathbb M}}
\nc{\bT}{{\mathbb T}}
\nc{\bW}{{\mathbb W}}
\nc{\bU}{{\mathbb U}}
\nc{\bD}{{\mathbb D}}
\nc{\bJ}{{\mathbb J}}
\nc{\bV}{{\mathbb V}}
\nc{\bbZ}{{\mathbb Z}}
\nc{\bR}{{\mathbb R}}
\nc{\fr}{{\rightarrow}}
\nc{\co}{{\nabla}}
\newcommand{\la}{\longrightarrow}
\nc{\cu}{{\barline{\nabla}}}
\DeclareMathOperator{\codim}{codim}
\title { The   Hodge number  $h^{1,1}$  of irregular algebraic surfaces}
\author{Andrea Causin}
\address{D.A.D.U.,
Universit\`a   di Sassari, Piazza Duomo 6, 07041 Alghero(SS), Italy}
\email{acausin@uniss.it}
\author{Margarida Mendes Lopes}
\address{Departamento de Matem\'atica, Instituto Superior T\'ecnico, Universidade de Lisboa, Av. Rovisco Pais,
1049-001 Lisboa, Portugal}
\email{mmlopes@math.tecnico.ulisboa.pt }
\author{Gian Pietro Pirola}
\address{Dipartimento di Matematica,
Universit\`a di Pavia,
Via Ferrata, 1,
 27100 Pavia, Italy}
\email{gianpietro.pirola@unipv.it }
\thanks{
  This research was partially supported by  FCT (Portugal) through program POCTI/FEDER, Project 
PTDC/MAT/099275/2008, Project  PTDC/MAT-GEO/0675/201,  and by  PRIN2012 ``Moduli, stutture geometriche e loro applicazione'' and   Regione Autonoma della Sardegna l.r.~7/2007 ``Stima delle deformazioni del SRT'' (Italy).  The first author and third authors  are members of GNSAGA.-INdAM,
and the second author is a member of the Center for Mathematical
Analysis, Geometry and Dynamical Systems (IST/UL) }\smallskip
\begin{document}

       \begin{abstract}   We prove a new inequality for the Hodge number $h^{1,1}$ of irregular complex smooth projective surfaces of general type without irregular pencils of genus $\geq 2$. More specifically  we show that if the irregularity $q$ satisfies $q=2^k+1$ then $h^{1,1}\geq 4q-3$. This generalizes results previously known for $q=3$ and $q=5$.
\medskip

{\it Mathematics Subject Classification(2010)}: Primary: 14J29, Secondary:  14C30, 15A30, 32J25

\end{abstract}
                   
\maketitle

\noindent

\section{Introduction}

Let $S$ be a complex smooth projective surface of general type.
We let $h^{p,s}=\dim H^{p,s}(S)$ be its $(p,s)$ Hodge number.  One has that $h^{1,0}$ is the irregularity $q$ of $S$ whilst $ h^{2,0}$ is its geometric genus  $p_g$.
We say that $S$ has  an irregular pencil of genus $b$ if  there is a morphism with connected fibres $\pi:S\to B$ over a curve $B$ of genus $b>0$.

In this paper we prove the following theorem:

\begin{theorem} \label{teorema}
If $S$ is a complex smooth projective surface of general type  without irregular pencils of genus $\geq 2$ and  $q=2^k+1$ then $h^{1,1}\geq 4q-3.$
\end{theorem}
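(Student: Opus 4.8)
The plan is to bound $h^{1,1}$ from below by the rank of the cup product on $H^1(S)$, and then to extract that rank from a topological obstruction for nonsingular bilinear maps, into which the arithmetic constraint $q=2^k+1$ feeds.

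First I would pass to the Albanese map $a\colon S\to A=\mathrm{Alb}(S)$. Since $S$ has no irregular pencil of genus $\geq 2$, the image $a(S)$ cannot be a curve, so it is a surface and $a$ is generically finite onto its image. Writing $V=H^{1,0}(S)=H^0(S,\Omega^1_S)$, so $\dim V=q$, the holomorphic and antiholomorphic $1$-forms are pullbacks of the translation-invariant forms on $A$, and the cup product class $\omega_i\wedge\bar\omega_j$ equals $a^{\ast}(dz_i\wedge d\bar z_j)$. Consequently $\operatorname{im}\bigl(a^{\ast}\colon H^{1,1}(A)\to H^{1,1}(S)\bigr)$ is exactly the image $W$ of the cup product $\mu\colon V\otimes\overline V\to H^{1,1}(S)$, $\mu(\omega\otimes\bar\eta)=\omega\wedge\bar\eta$, and therefore $h^{1,1}(S)\geq\dim_{\mathbb C} W=\operatorname{rank}\mu$. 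The whole problem is thus reduced to proving $\operatorname{rank}\mu\geq 4q-3$.

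Next I would record two nonsingularity statements. The map $\mu$ has no zero divisors: the harmonic representative of $\omega\wedge\bar\eta$ is the $(1,1)$-form $\omega\wedge\bar\eta$ itself, which vanishes pointwise only where $\omega$ or $\eta$ does, so $\omega\wedge\bar\eta=0$ forces $\omega=0$ or $\eta=0$, and this holds for every surface. The hypothesis on pencils enters through the companion wedge map $\alpha\colon\wedge^2 V\to H^{2,0}(S)$: by the Castelnuovo--de Franchis theorem the absence of irregular pencils of genus $\geq 2$ means that $\ker\alpha$ contains no nonzero decomposable vector, i.e. $\mathbb P(\ker\alpha)$ is disjoint from the Grassmannian $G(2,V)\subset\mathbb P(\wedge^2 V)$, which by a dimension count already forces $\operatorname{rank}\alpha\geq 2q-3$. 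I would then assemble $\alpha$, $\mu$ and the conjugate of $\alpha$ into the single real cup product $\wedge^2 H^1(S,\mathbb R)\to H^2(S,\mathbb R)$, whose image has real dimension $2\operatorname{rank}\alpha+\operatorname{rank}\mu$ and which, by the two nonsingularity facts, is a nonsingular skew bilinear map on a real space of dimension $2q$.

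The decisive step is to feed this nonsingular map into the Hopf--Stiefel obstruction: a nonsingular (skew) bilinear map of $\mathbb R^{2q}$ into $\mathbb R^{N}$ forces the mod $2$ relations $\binom{N}{i}\equiv 0\pmod 2$ over the range of $i$ determined by $2q$. Here the hypothesis $q=2^k+1$ is precisely the value that makes these congruences fail for every $N<4q-3$, so the obstruction rules out a target of dimension less than $4q-3$; combined with the bound $\operatorname{rank}\alpha\geq 2q-3$ this yields $\operatorname{rank}\mu\geq 4q-3$, hence the theorem. I expect the main obstacle to be exactly this last step: turning the Hodge-theoretic cup product into a genuine nonsingular bilinear map to which a Hopf-type theorem applies, keeping careful track of how $\operatorname{rank}\alpha$ and $\operatorname{rank}\mu$ split the target dimension, and verifying that the $2$-adic arithmetic of $q=2^k+1$ forces the sharp bound $4q-3$ rather than a weaker linear one.
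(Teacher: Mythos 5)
Your starting reduction (Albanese map, $h^{1,1}\geq\operatorname{rank}\mu$ where $\mu\colon H^{1,0}\otimes\overline{H^{1,0}}\to H^{1,1}(S)$ is the cup product, i.e. $h^{1,1}\geq q^2-\dim K$ with $K=\ker a^\ast$) is exactly the paper's Section 2, and your instinct that $q=2^k+1$ must enter through a mod $2$ obstruction is also the paper's. But two steps break down. First, the ``no zero divisors'' claim for $\mu$ is false as stated, and its justification is wrong: a wedge of harmonic forms need not be harmonic. Concretely, on $S=C_1\times C_2$ with $g(C_1)\geq 2$, two nonzero Hodge-orthogonal forms $\omega,\eta\in H^{1,0}(C_1)$ pulled back to $S$ satisfy $[\omega\wedge\bar\eta]=0$. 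The statement you need is true only under the no-pencil hypothesis: $[\omega\wedge\bar\eta]=0$ gives $0=\int_S\omega\wedge\bar\eta\wedge\eta\wedge\bar\omega=\pm\int_S(\omega\wedge\eta)\wedge\overline{(\omega\wedge\eta)}$, hence $\omega\wedge\eta=0$, and then Castelnuovo--de Franchis applies; this is precisely Proposition \ref{inerzia} of the paper, in matrix language: every nonzero $X\in K(\bR)$ has minimal inertia $\geq 2$.

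Second, and fatally, the last step is a non sequitur on two counts. (a) The Hopf--Stiefel congruences $\binom{N}{i}\equiv 0\pmod 2$ apply to bilinear maps with $\phi(x,y)=0\Rightarrow x=0$ or $y=0$; an alternating map never satisfies this (since $\phi(x,x)=0$), so the theorem you invoke does not apply to $\bigwedge^2H^1(S,\bR)\to H^2(S,\bR)$ at all. The correct substitute for nonsingular alternating maps is a bundle argument: for $x\neq 0$ the kernel of $\phi(x,\cdot)$ is exactly $\bR x$, giving an injection of $\gamma\otimes(\bR^{2q}/\gamma)\cong T\bP(\bR^{2q})$ into a trivial rank-$N$ bundle, hence vanishing of dual Stiefel--Whitney classes. (b) Even with that repair, what you bound is the dimension of the \emph{total} image, namely $2\operatorname{rank}\alpha+\operatorname{rank}\mu$; combining this with the \emph{lower} bound $\operatorname{rank}\alpha\geq 2q-3$ goes the wrong way (a lower bound on $\operatorname{rank}\alpha$ weakens, not strengthens, any conclusion about $\operatorname{rank}\mu$), so you would only obtain $2p_g+h^{1,1}\geq 4q-3$. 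And even after projecting the cup product onto its $(1,1)$-part, where nonsingularity is exactly Proposition \ref{inerzia}, the Stiefel--Whitney computation for $2q=2^{k+1}+2$ has top nonzero dual class in degree $2q-4$, yielding only $h^{1,1}\geq 4q-5$ (e.g. $7$ instead of $9$ for $q=3$). The missing $+2$ is what the paper's real-algebraic argument supplies and homotopy theory alone does not: the determinantal variety $D_2$ of rank $\leq 2$ matrices has odd degree exactly when $q=2^k+1$ (Proposition \ref{dispari}), its real locus splits as $D_0\cup D_1$ with $[D_0]=0$ because the trace-zero hyperplane misses the semidefinite locus, and the cone $C_1$ over $D_1$ with vertex at the identity carries the nonzero class $h^{N-(4q-4)}\in H^\ast(\bP_\bR,\bZ_2)$ (Lemma \ref{class}); since $\bP(K(\bR))$ consists of points with minimal inertia $\geq 2$ it must avoid $C_1$, forcing $\codim K(\bR)\geq 4q-3$ (Proposition \ref{spazio}) and hence $h^{1,1}\geq 4q-3$.
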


We note that   $h^{1,1}\geq 3q-2$ for complex smooth projective surfaces of general type.  In fact 
from the Bogomolov-Myiaoka-Yau inequality $c_2\geq 3\chi$,  and the fact that $c_2=2-4q+2p_g+ h^{1,1}$  one obtains

\begin{equation}\label{initialineq} h^{1,1}\geq p_g+q+1.\end{equation}

Then, by \cite{Be},   $p_g\geq 2q-4$ and equality holds if and only if  $S$ is birational to a product of two curves one of which has genus $2$.  In this last case,  one has $c_2=4\chi$,  and $ p_g=2(q-2)$ hence $ h^{1,1}=4q-6$.   Since in this case also $q\geq 4$,  we  conclude that  always
 \begin{equation}\label{popa}h^{1,1}\geq 3q-2.
\end{equation}

For surfaces $S$ without  irregular pencils of genus $\geq 2$,  Lazarsfeld and Popa in \cite{lazpopa} reproved  inequality  (\ref{popa}) and also that, if $q$ is odd,
\begin{equation}\label{lazp} h^{1,1}\geq 3q-1.
\end{equation}

If there is an irregular pencil with connected fibres   $\pi:S\to B$ over a curve $B$ of genus $b$, the following inequality has been recently proven in \cite{jsfk}:

\begin{equation} 
h^{1,1}\geq 2b(q-b)+2+\sum (l(F)-1)
\end{equation}
where $l(F)$ is the number of the irreducible components  of a fiber $F$.

\bigskip

In some instances, for surfaces $S$ without  irregular pencils of genus $\geq 2$,    inequalities (\ref{popa})  and (\ref {lazp}) can be improved: 

\begin{itemize}
\item[-] in the case $q=3$, $h^{1,1}\geq 9$ by \cite{cau}. No examples of surfaces with $h^{1,1}=9$ are known; the symmetric product of a curve of genus $3$, however, has $h^{1,1}=10$;
\item[-] when $q=4$, in \cite{cau} it is proven that $h^{1,1}\geq 11$. The Schoen surface (see \cite{Schoen}  and also \cite{cmr}) realizes the smallest known value of $h^{1,1}=12$; the next known example, $h^{1,1}=17$, is given by the symmetric product of a curve of genus $4$;
 \item[-] for $q=5$ we have $h^{1,1}\geq 17$ again by \cite{cau}. No examples are known for $h^{1,1}<25$; the Fano surface of the lines of a smooth cubic $3-$fold has $h^{1,1}=25$;  the symmetric product of a curve of genus $5$ gives $h^{1,1}=26$.
 \end{itemize}

\medskip

Theorem \ref{teorema}  extends the above cases $q=3$ and $q=5$. 
To prove Theorem \ref{teorema}, we use the  the following linear algebra result to estimate the dimension of the (1,1) part of image of  the cup product map   $\bigwedge^2H^1(S,\mathbb C)\to H^2(S,\mathbb C)$:

 \begin{proposition} \label{spazio}  Let $\mathcal H$ be the real vector space of $q\times q$ complex hermitian matrices.
 
 If  $q=2^k+1,  k\geq 2$ and   $L\subset \mathcal H$ is a (real) vector subspace  such that
every  $X\in L\setminus\{0\}$ has at least 2 positive and 2 negative eigenvalues  then $\dim L\leq q^2-(4q-3).$ \label{mat} \end{proposition}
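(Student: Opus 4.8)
The plan is to argue by contrapositive: assuming $\dim L>q^2-(4q-3)=(q-1)(q-3)$, I will produce a nonzero $X\in L$ with $n_+(X)\le 1$ or $n_-(X)\le 1$. First I would restate the hypothesis spectrally. Ordering the eigenvalues $\lambda_1(X)\ge\cdots\ge\lambda_q(X)$, the condition ``at least $2$ positive and $2$ negative'' is exactly $\lambda_2(X)>0$ and $\lambda_{q-1}(X)<0$ for every $X\in L\setminus\{0\}$; note the antipodal symmetry $\lambda_i(-X)=-\lambda_{q+1-i}(X)$, so that the positive and negative eigenspaces satisfy $E_+(-X)=E_-(X)$. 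A first sanity check, which also indicates where the difficulty lies, is that since $q$ is odd the real polynomial $t\mapsto\det(X+tY)$ has odd degree, hence a real root; thus every pencil in $\cH$ contains a singular matrix. Consequently $L$ cannot avoid singular matrices, the signature is \emph{not} locally constant on $L\setminus\{0\}$, and any argument must cope with eigenvalue crossings at $0$.

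The main idea is topological. Goodness of $L$ should be turned into a $\bZ/2$-equivariant map from the sphere $S(L)\cong S^{d-1}$ (antipodal action, $d=\dim L$) into a configuration space $Y$ carrying a free involution, built from the extremal spectral data: to $X$ one wants to attach the positive $2$-plane $W_+(X)$ spanned by the eigenvectors of $\lambda_1,\lambda_2$ and the negative $2$-plane $W_-(X)$ of $\lambda_{q-1},\lambda_q$, with the antipode of $X$ exchanging the two. Passing to $\mathbb{RP}(L)=\mathbb{RP}^{d-1}$ and pulling back the tautological bundles, the existence of such an equivariant map is obstructed by the mod-$2$ cohomology of $\mathbb{RP}^{d-1}$, which is $\bZ/2[t]/(t^{d})$ with $t=w_1$ of the tautological line bundle: an equivariant map $S^{d-1}\to Y$ can exist only if $d-1$ does not exceed the $\bZ/2$-index (the top nonzero power of the relevant Euler/Stiefel--Whitney class) of $Y$.

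Where I expect the hypothesis $q=2^k+1$ to enter is precisely the computation of this index. The constant $4q-3$ should reflect the two positive and two negative directions, each a complex (hence real-rank-$2$) constraint, and the $2$-power hypothesis is what makes the relevant top product of Chern/Stiefel--Whitney classes survive, by a Lucas/Wu-type divisibility that fails for general odd $q$. Carrying this out should identify the $\bZ/2$-index of $Y$ with $(q-1)(q-3)=q^2-(4q-3)$, so that the equivariant map forces $d-1\le(q-1)(q-3)$; an excess dimension then yields, through the failure of the map, a nonzero $X$ whose extremal $2$-planes collapse, i.e. with $\lambda_2(X)\le 0$ or $\lambda_{q-1}(X)\ge 0$, contradicting goodness.

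The hard part will be twofold. First, the assignment $X\mapsto(W_+(X),W_-(X))$ is only continuous where there are spectral gaps $\lambda_2>\lambda_3$ and $\lambda_{q-2}>\lambda_{q-1}$, which goodness does not guarantee; the crossings, together with the unavoidable singular matrices found above, must be absorbed, either by a perturbation/genericity argument replacing $Y$ by a homotopy-equivalent model insensitive to interior crossings, or by working with the full positive/negative eigenspace bundles and a virtual ($K$-theoretic) class rather than honest $2$-planes. Second, one must compute the $\bZ/2$-index of $Y$ and verify that the governing characteristic number is nonzero \emph{exactly} under the arithmetic hypothesis $q-1=2^k$; this divisibility computation is the crux, and is what pins the statement to $q=2^k+1$ rather than to arbitrary odd $q$.
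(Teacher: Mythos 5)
Your proposal is a program, not a proof: you correctly sense that the statement lives in the mod-$2$ topology of a real projective space and that the hypothesis $q=2^k+1$ must enter through a $2$-adic divisibility, but you explicitly leave open the two steps on which everything depends. First, your map $X\mapsto (W_+(X),W_-(X))$ to a configuration space $Y$ is, as you admit, discontinuous at eigenvalue crossings, and neither of your suggested repairs (perturbation, or a $K$-theoretic virtual class) is carried out; since you also observe that singular matrices and crossings are unavoidable in $L$ (odd $q$ forces $\det(X+tY)$ to have real roots), this is not a technicality one can wave away. Second, the computation of the $\bZ/2$-index of $Y$, together with the verification that the governing characteristic number is nonzero exactly when $q-1=2^k$, is precisely the content of the proposition --- you label it ``the crux'' and do not attempt it. A proof attempt that defers both the construction of the equivariant map and the characteristic-class computation has no actual mathematical content beyond the (correct) dimension count $q^2-(4q-3)=(q-1)(q-3)$.

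For comparison, the paper's argument avoids building any map out of $L$ at all; it works instead with the locus of \emph{bad} matrices inside $\bP_\bR=\bP(\cH)$. Let $D_2\subset\bP(M)$ be the determinantal variety of rank $\leq 2$ matrices, of dimension $4q-5$. By the Harris--Tu degree formula and the Friedland--Krattenthaler $2$-adic valuation result, $\deg D_2$ is odd if and only if $q=2^k+1$; hence in that case the $\bZ_2$-fundamental class of $D_2(\bR)$ is the nonzero power $h^{N-(4q-5)}$ in $H^\ast(\bP_\bR,\bZ_2)\cong\bZ_2[h]/(h^{N+1})$. The real locus splits as $D_1\cup D_0$ (closure of signature $(1,1)$, resp.\ semidefinite matrices); the hyperplane of trace-zero matrices misses $D_0$, so $[D_0]=0$ and all the cohomology is carried by $D_1$. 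One then cones with vertex at the identity matrix $\bI$: every point $\langle tY+sI\rangle$ of the cone $C_1$ over $D_1$ still has minimal inertia $\leq 1$ (its spectrum is $s$ with multiplicity $q-2$ plus two more eigenvalues), the cone $C_2$ has odd degree and dimension $4q-4$, and $[C_0]=0$ because a cone over a nullhomologous cycle is nullhomologous, so $[C_1]=h^{N-(4q-4)}\neq 0$. Consequently any projective subspace $\bL\subset\bP_\bR$ of codimension $<4q-3$ satisfies $[\bL]\cdot[C_1]\neq 0$, hence meets $C_1$, producing $X\in L\setminus\{0\}$ with $m_X\leq 1$. Note two points your sketch misses entirely: the bad locus must be enlarged from rank-$\leq 2$ matrices (dimension $4q-5$, one short of what is needed) to the cone over them with vertex $I$, and the semidefinite component must be killed cohomologically (via the trace hyperplane) because points of the cone over $D_0$ can perfectly well have two positive and two negative eigenvalues. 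If you want to complete a proof along your equivariant lines, you would in effect have to rediscover these ingredients; as written, the proposal has a genuine gap.
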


Theorem \ref{teorema} does not hold for the Schoen surface which has  $q\neq 2^k+1$. We notice the analogy with the role played by   $2-$adic valuations for the estimates of dimensions of spaces of  constant rank  matrices  in \cite{adams} and \cite{causin2}.

\bigskip
 
The paper is organized as follows.  In Section 2 we explain  how hermitian matrices relate to lower bounds for  $h^{1,1}$ and in Section 3 we prove Proposition \ref{spazio} and Theorem \ref{teorema}. Finally in Section 4 we give some corollaries of Theorem \ref{teorema}. 

\medskip

\section{Albanese variety and Hermitian matrices} \label{her}

  Given a complex variety  $V$ with a real structure, we denote by $V(\mathbb R)$ (or by $V_\mathbb R$, should the notation become too cumbersome) the locus of its real points.  

\medskip

Given a complex smooth projective  irregular surface of general type $S$,  let $A$ be the Albanese variety of $S$ and $a: S\la A$  the Albanese mapping.
Defining the pull-back map $$a^\ast : H^{1,1}(A)\la H^{1,1}(S),$$ denote by 
$K$ the kernel of $a^\ast$.  Then clearly  $h^{1,1}\geq q^2-\dim K$ and our strategy for proving Theorem \ref{teorema} is to give an upper bound for the dimension of $K$.

Complex conjugation  of forms gives a real structure to the vector spaces $H^{1,1}(A)$ and $H^{1,1}(S)$ and $a^\ast$ is a real map; denote by  $a^{\ast}_{\bR}$ the induced map:

$$a^{\ast}_{\bR} : H^{1,1}(A)_\bR \to H^{1,1}(S)_\bR.$$ 

 Since $K(\bR)=\ker (a^{\ast}_{\bR})$ we have  $\dim_\bC K= \dim_\bR K(\bR).$
As in \cite{cau}, we identify $H^{1,1}(A)$ with the vector space $M:=M(q,\bC)$  of $q\times q$ complex matrices 
and $H^{1,1}(A)_\bR$ with the real space $\mathcal  H\subset M$ of the hermitian matrices.
Remark that the real structure of $M$ is here given by the involution $B\to ^t\!\!\overline B,$ and $\mathcal H=M(\bR).$ 
In this way we identify $K(\bR)$ with a real subspace of $\mathcal H.$

\begin{definition} For any $X\in \mathcal H$ the minimal  inertia of $X$ is the integer $ m_X= \min \{n_+,n_-\}$
 where $(n_+,n_-)$ is the signature (or inertia) of $X.$
 \end{definition}
 
 Remark that $m_X=0$ if and only if $X$ is semidefinite, that for any non-zero real number 
 $\lambda$ we have $m_X=m_{\lambda X}$ and also that the rank of $X$ is $\geq 2m_X$.
 
The following statement (which obviously generalizes to higher dimensions, see \cite{cau}) is an essential ingredient for studying the dimension of $K$:

 \begin{proposition}\label{inerzia} If $S$ has no irrational pencils of genus $ \geq 2$ then $m_X>1$ for any $X\in K(\bR)\setminus\{0\}.$
 \end{proposition}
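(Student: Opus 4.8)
The plan is to argue by contraposition: assuming that some nonzero $X\in K(\bR)$ has $m_X\le 1$, I would manufacture an irrational pencil of genus $\ge 2$, contradicting the hypothesis. First I would translate the condition $X\in K(\bR)$ into an identity among holomorphic forms. Under the identifications of Section~\ref{her}, a hermitian matrix $X=(x_{ij})$ lies in $K(\bR)$ exactly when the closed real $(1,1)$-form $\sum_{i,j}x_{ij}\,\omega_i\wedge\overline{\omega_j}$ (the pull-back under $a^\ast$ of the corresponding form on $A$) represents the zero class in $H^{1,1}(S)$, where $\omega_1,\dots,\omega_q$ is a basis of $H^0(S,\Omega^1_S)$. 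By Sylvester's law of inertia I may apply a congruence $X\mapsto C^\ast X C$, i.e.\ pass to a new basis $\theta_1,\dots,\theta_q$ of holomorphic $1$-forms, so that this relation becomes
\[
\sum_{k=1}^{n_+}\theta_k\wedge\overline{\theta_k}\;=\;\sum_{l=1}^{n_-}\theta_{n_++l}\wedge\overline{\theta_{n_++l}}\qquad\text{in }H^{1,1}(S),
\]
with the $\theta$'s linearly independent. The case $m_X=0$ (i.e.\ $X$ semidefinite) should be disposed of first: replacing $X$ by $-X$ if needed I may assume $X$ is positive semidefinite, so the right-hand sum is empty and $\eta:=i\sum_{k=1}^{n_+}\theta_k\wedge\overline{\theta_k}$ is a semipositive real $(1,1)$-form representing $0$, with $n_+\ge1$. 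Pairing with a K\"ahler class $[\kappa]$ gives $0=\int_S\eta\wedge\kappa$, and since $\eta\wedge\kappa\ge0$ pointwise this forces $\eta\equiv0$, whence each $\theta_k\equiv0$, contradicting $X\neq0$. So $m_X\ge 1$ unconditionally.

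The substantial case is $m_X=1$. Replacing $X$ by $-X$ if necessary I may assume $n_-=1$; writing $\beta:=\theta_{n_++1}$ and $\alpha_i:=\theta_i$, the relation reads
\[
\sum_{i=1}^{n_+}\alpha_i\wedge\overline{\alpha_i}\;=\;\beta\wedge\overline{\beta}\qquad\text{in }H^{1,1}(S),
\]
where $\beta,\alpha_1,\dots,\alpha_{n_+}$ are linearly independent and $n_+\ge1$. The key move is to cup both sides with the closed form $\beta\wedge\overline\beta$. Since the left-hand side represents the zero class in $H^2(S)$, this cup product vanishes. On the right we get $\beta\wedge\overline\beta\wedge\beta\wedge\overline\beta=0$ because $\beta\wedge\beta=0$, while on the left I would use the pointwise identity $\alpha_i\wedge\overline{\alpha_i}\wedge\beta\wedge\overline\beta=-(\alpha_i\wedge\beta)\wedge\overline{(\alpha_i\wedge\beta)}$. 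Setting $\sigma_i:=\alpha_i\wedge\beta\in H^0(S,\Omega^2_S)$, integration therefore yields
\[
\sum_{i=1}^{n_+}\int_S\sigma_i\wedge\overline{\sigma_i}=0 .
\]
For a holomorphic $2$-form one has $\int_S\sigma\wedge\overline{\sigma}\ge0$ with equality iff $\sigma=0$, so all the summands being nonnegative forces $\sigma_i=\alpha_i\wedge\beta=0$ for every $i$.

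Finally, since $\alpha_1$ and $\beta$ are linearly independent holomorphic $1$-forms with $\alpha_1\wedge\beta=0$, the classical Castelnuovo--de Franchis theorem produces a fibration $f\colon S\to B$ with connected fibres onto a smooth curve $B$ of genus $\ge 2$ such that $\alpha_1,\beta\in f^\ast H^0(B,\Omega^1_B)$; this is an irrational pencil of genus $\ge 2$, contradicting the hypothesis. Hence $m_X>1$ for every nonzero $X\in K(\bR)$. I expect the main obstacle to be the middle step: choosing the correct test class to cup with and keeping careful track of the signs, so that the resulting integrals come out genuinely sign-definite. That is precisely what upgrades the purely cohomological relation into the pointwise vanishing $\alpha_i\wedge\beta=0$ that triggers Castelnuovo--de Franchis; the two positivity inputs (for $i\,\theta\wedge\overline\theta$ on $1$-forms and for $\sigma\wedge\overline\sigma$ on $2$-forms) are the technical heart of the argument.
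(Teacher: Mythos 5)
Your proof is correct and follows essentially the same route as the paper's: diagonalize the hermitian form via Sylvester, cup the resulting cohomological relation with $\beta\wedge\overline{\beta}$ (the square of the negative eigendirection, the paper's $\alpha\wedge\overline{\alpha}$), use the sign-definiteness of $\int_S \sigma\wedge\overline{\sigma}$ for holomorphic $2$-forms to obtain the pointwise vanishing $\alpha_i\wedge\beta=0$, and conclude with Castelnuovo--de Franchis. The only cosmetic difference is that you dispose of the semidefinite case separately by pairing with a K\"ahler class, whereas the paper folds it into the same computation by allowing $\lambda=0$.
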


 \begin{proof} Suppose $\omega=i\sum_{s,k} x_{k,s}\ dz_s\wedge d{\bar z}_k\in K(\bR)\setminus\{0\}$, with $X=(x_{k,s})$ a hermitian matrix.  Let  as above $ (n_+,n_-)$ be the signature of $X$ and assume by contradiction that 
 $m_X= \min \{n_+,n_-\}\leq  1$; note that we may assume without restrictions (by taking $-\omega$, if necessary) that $n_-\leq 1.$  
 
By diagonalizing $X$ we may write
 $\omega= i( \sum_j\beta_j\wedge {\bar \beta}_j -\lambda \alpha \wedge{\bar\alpha}) $
 where $\alpha$ is a non zero $(1,0)$-form,  $\beta_j$ are independent $(1, 0)$-forms on $A,$ and  $\lambda\in \bR, $ $\lambda\geq 0$. Note that   $\lambda= 0$ gives the case $n_-=0.$ 
 
 The pullback via the Albanese map gives an identification of $H^{1,0}(A)$ with $H^{1,0} (S)$ and, by abuse of language, we set $a^\ast(\gamma)=\gamma$ for any $\gamma \in H^{1,0}(A)$. Since $a^\ast(\omega)=0,$ by integration on $S,$ we obtain:
$$0=\int_S a^\ast(\omega)\wedge{\alpha\wedge\overline \alpha}=i \sum_j\int _S\beta_j\wedge {\bar \beta}_j\wedge \alpha\wedge\bar\alpha.$$  
Since all the summands have the same sign it follows that
 $$0=i\int _S \beta_j\wedge {\bar \beta}_j\wedge \alpha\wedge{\bar\alpha}=
i \int_S\alpha\wedge\beta_j\wedge\overline{\alpha\wedge\beta_j}, \quad \hbox{ for any }j.$$ This gives by positivity $\alpha\wedge\beta_j=0.$
Since, by hypothesis, $S$ has no irrational pencils of genus $ \geq 2$,  the Castelnuovo - de Franchis theorem  (\cite{dF}, see also \cite {beauv}) gives $\alpha=\beta_j=0.$ Hence $\omega=0$, a contradiction.
 \end{proof}

 \section{Proof of Proposition \ref{spazio}  and Theorem \ref{teorema} }
 
Let  $M$  be the space  of $q\times q$ complex matrices  and
 $\mathcal  H\subset M$ the real space  of the hermitian matrices. Let $M_2\subset M$  be the locus of the matrices of rank $\leq 2$ and $\mathcal H_2=M_2(\bR)\subset \mathcal H$ be the set of hermitian matrices of rank
$\leq 2.$

Consider the projective space $\bP:=\bP(M)$ which is a complex projective space of dimension $N=q^2-1.$
 Let $\bP_\bR\subset \bP$ be the real projective space corresponding to $\mathcal H$
and $D_2\subset \bP$ the determinantal variety corresponding to $M_2:$
$$D_2=\{ \langle X\rangle \in \bP: X\in M_2\}.$$ Then we have
$$D_2(\bR)= D_2\cap \bP_\bR= \{ \langle X\rangle \in \bP: X\in \mathcal H_2\};$$ Moreover, 
$D_2(\bR)$ is the union of two components $D_1$ and $D_0$,  where $D_1$ is the closure of the locus of matrices with signature $(1,1)$, that is of $ \{\langle X\rangle \in D_2: m_X=1\}$, and $ D_0=\{\langle X\rangle \in D_2: m_X=0\}$. 
Note that the intersection $D_1\cap D_0$ corresponds to the hermitian matrices of rank $1.$

One has $\dim_\bC D_2=\dim_\bR D_2(\bR)= 4q-5$ and moreover, by  \cite{h-t}, and \cite{f-k}:
\begin {proposition} \label{dispari} The degree
of $D_2$ is odd if and only if $q=2^k+1$.
\end{proposition}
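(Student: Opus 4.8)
The plan is to reduce the parity question to an explicit arithmetic computation via the classical degree formula for generic determinantal varieties. First I would invoke the Giambelli--Thom--Porteous formula in the form recorded in \cite{h-t} and \cite{f-k}: the variety of $q\times q$ matrices of rank $\le 2$, which has codimension $(q-2)^2$, has degree
$$\deg D_2=\prod_{i=0}^{q-3}\frac{(q+i)!\,i!}{(i+2)!\,(q-2+i)!}.$$
Each factor simplifies to $\frac{(q+i)(q+i-1)}{(i+1)(i+2)}$, and collecting the four resulting products as ratios of factorials telescopes the whole expression into the closed form
$$\deg D_2=\frac{(2q-3)!\,(2q-4)!}{\bigl[(q-1)!\,(q-2)!\bigr]^{2}}.$$

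Once this closed form is in hand, the rest is a $2$-adic valuation computation. Writing $v_2$ for the $2$-adic valuation and $s_2(m)$ for the sum of the binary digits of $m$, Legendre's formula gives $v_2(m!)=m-s_2(m)$, so
$$v_2(\deg D_2)=(2q-3)+(2q-4)-2(q-1)-2(q-2)-s_2(2q-3)-s_2(2q-4)+2s_2(q-1)+2s_2(q-2).$$
The integer part collapses to $-1$. For the digit sums I would use the elementary identities $s_2(2m)=s_2(m)$ and $s_2(2m+1)=s_2(m)+1$: since $2q-4=2(q-2)$ and $2q-3=2(q-2)+1$, one gets $s_2(2q-4)=s_2(q-2)$ and $s_2(2q-3)=s_2(q-2)+1$. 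Substituting, all the $s_2(q-2)$ contributions cancel and the whole expression reduces to
$$v_2(\deg D_2)=2\bigl(s_2(q-1)-1\bigr).$$

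From this the proposition is immediate: $\deg D_2$ is odd exactly when $v_2(\deg D_2)=0$, that is when $s_2(q-1)=1$, which happens if and only if $q-1$ is a power of $2$, i.e. $q=2^k+1$. The same formula also shows that the valuation is always nonnegative, as it must be, giving a built-in consistency check.

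I expect the only real obstacle to be the bookkeeping in the two reductions: first, correctly specializing the determinantal degree formula and verifying the telescoping into a ratio of four factorials, and second, tracking the cancellation of the $s_2(q-2)$ terms in the valuation. Both steps are mechanical, but a sign or an off-by-one error in the product range would propagate through everything, so I would sanity-check the closed form against the known small cases $q=3$ (where $\deg D_2=3$), $q=4$ (where $\deg D_2=20$) and $q=5$ (where $\deg D_2=175$) before trusting the valuation argument.
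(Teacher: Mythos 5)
Your proof is correct, and it differs from the paper's in a worthwhile way. Both arguments begin identically, with the Harris--Tu degree formula from \cite{h-t}: the paper writes $\deg D_2=\prod_{j=0}^{q-3}\binom{q+j}{q-2}\big/\binom{q-2+j}{q-2}$ and simplifies each factor to $\frac{(q+j-1)(q+j)}{(j+1)(j+2)}$, exactly matching your product. The divergence is in how the parity is decided: the paper does no further computation and simply cites \cite{f-k}, Section~6, for the statement that this quantity is odd if and only if $q-2$ and $q-1$ have disjoint binary expansions, i.e.\ if and only if $q=2^k+1$. You instead make the arithmetic self-contained: your telescoping to the closed form
\begin{equation*}
\deg D_2=\frac{(2q-3)!\,(2q-4)!}{\bigl[(q-1)!\,(q-2)!\bigr]^{2}}
\end{equation*}
is correct (as are your sanity values $3$, $20$, $175$ for $q=3,4,5$), and the Legendre-formula computation yielding $v_2(\deg D_2)=2\bigl(s_2(q-1)-1\bigr)$ checks out, with the digit-sum cancellation working exactly as you describe. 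What your route buys is independence from the external reference and a strictly stronger conclusion --- the exact $2$-adic valuation rather than just its vanishing, with the parity of the valuation as a built-in consistency check. What the paper's route buys is brevity, and it rests on a general theorem about $2$-adic valuations of ratios of factorial products that applies uniformly to other determinantal loci. As a side remark, your closed form factors as $\binom{2q-3}{q-1}\cdot C_{q-2}$, where $C_{q-2}$ is a Catalan number; this is precisely the kind of expression whose parity the disjoint-binary-expansion criterion of \cite{f-k} (essentially Kummer's theorem on carries) is designed to handle, so the two proofs are ultimately computing the same thing by different bookkeeping.
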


\begin{proof} 
By \cite{h-t}, the degree of $D_2$ is 
$$\deg D_2 = \prod_{j=0}^{q-3}\ \frac{ \displaystyle{{q+j} \choose {q-2}}}{ \displaystyle{{q-2+j} \choose {q-2}}}=  \prod_{j=0}^{q-3}\ \frac{(q+j-1)(q+j)}{(j+1)(j+2)}$$  and by \cite{f-k}, sect.~$6$,  this quantity is odd if and only if $q-2$ and $q-1$ have disjoint binary expansion, {\em i.e.} when $q=2^k+1.$
\end{proof}

 Let $h\in H^{1}(\bP_\bR,\bZ_2)$ be the class of a hyperplane. We recall that the ring of $\bZ_2$-cohomology of the real projective space $\bP_\bR$ is generated by $h$ with the relation $h^{N+1}=0$, that is 
$H^\ast (\bP_{\bR}, \bZ_2)\equiv \bZ_2[h]/h^{N+1}.$ It follows that the dual $\bZ_2$-cohomology class $[Y]$ of
any real algebraic variety $Y\subset \bP_{\bR}$ of dimension $n$ and odd degree is not trivial, hence equal to $h^{N-n}.$

In particular, for  $q=2^k+1$, by Proposition \ref{dispari}  the class $x=[D_2(\bR)]$  is
$$x= h^{N-(4q-5)}\in H^{N-(4q-5)}(\bP_\bR,\bZ_2) .$$

We assume from now on that $q=2^k+1$ and $q\geq 5$.

We now take into account the classes of  the components of $D_2(\bR)$:

 \begin{lemma} Let $y= [D_0]$ and $z=[D_1]$  in $ H^{N-(4q-5)}(\bP_\bR,\bZ_2)$.  Then $y=0$ and $z=x.$
 \end{lemma}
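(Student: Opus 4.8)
The plan is to reduce the lemma to the single vanishing $y=[D_0]=0$ and then to prove that vanishing by trapping $D_0$ inside a topological sphere of dimension too large to carry a nonzero class in the relevant degree.

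\emph{Step 1 (additivity and reduction).} Since $D_2(\bR)=D_0\cup D_1$, and the overlap $D_0\cap D_1$ is the locus of rank-$1$ hermitian matrices, which has dimension $2q-2$, strictly smaller than $\dim D_0=\dim D_1=4q-5$ (indeed $2q-2<4q-6$ for $q\ge 3$), both $D_0$ and $D_1$ are compact semialgebraic sets of pure dimension $4q-5$ meeting in lower dimension. Each therefore carries a $\bZ_2$-fundamental class, and the class of the union is the sum of the classes of the pieces, so $x=y+z$ in $H^{N-(4q-5)}(\bP_\bR,\bZ_2)$. As this group is $\bZ_2\cdot h^{N-(4q-5)}$ and $x=h^{N-(4q-5)}\neq 0$ by Proposition \ref{dispari} and the discussion preceding the lemma, one has $(y,z)\in\{(0,x),(x,0)\}$. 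Hence it suffices to prove $y=0$.

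\emph{Step 2 (the sphere argument).} I would realize $D_0$ inside the boundary of the projectivized positive semidefinite cone. Let $\cC\subset\cH$ be the cone of positive semidefinite hermitian matrices: it is closed, convex, full-dimensional and pointed (if $X\ge 0$ and $-X\ge 0$ then $X=0$), so its image $P\cC\subset\bP_\bR$ is homeomorphic to a closed $N$-ball and its topological boundary $\partial(P\cC)$ is a sphere $S^{N-1}$, consisting of the projective classes of the semidefinite but non-definite matrices, i.e. of psd matrices of rank $\le q-1$. Every class in $D_0$ is semidefinite of rank $\le 2\le q-1$ (and $<q$ since $q\ge 5$), hence lies on $\partial(P\cC)$; thus $D_0\subset\partial(P\cC)\cong S^{N-1}$. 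Passing to Poincaré duals in the closed manifold $\bP_\bR$, the dual of $y$ is the image of the fundamental class of $D_0$, and by functoriality under the inclusions $D_0\hookrightarrow\partial(P\cC)\hookrightarrow\bP_\bR$ it factors through $H_{4q-5}(\partial(P\cC),\bZ_2)=H_{4q-5}(S^{N-1},\bZ_2)$. Because $0<4q-5<N-1$ (the right inequality being $(q-1)(q-3)>0$, valid as $q\ge 5$), this group vanishes; therefore $[D_0]=0$ already in $\partial(P\cC)$ and \emph{a fortiori} $y=0$ in $\bP_\bR$. From $x=y+z$ we then get $z=x$.

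\emph{Main obstacle.} The clever observation (that $D_0$ sits in the boundary sphere of the convex body $P\cC$) is essentially free once set up; the delicate part is the bookkeeping of fundamental classes in Step 1. The sets $D_0$ and $D_1$ are only semialgebraic — they are cut out by sign conditions on eigenvalues, not by polynomial equations — so I must invoke the theory of $\bZ_2$-fundamental classes of compact semialgebraic sets, and verify that the singular and overlap loci have dimension $<4q-6$, so that each piece and their union are genuine mod-$2$ cycles and additivity is legitimate. The same care justifies that the pushforward of the fundamental class of $D_0$ (the closure of the smooth rank-exactly-$2$ psd stratum) along $\partial(P\cC)\hookrightarrow\bP_\bR$ is precisely the class $y$.
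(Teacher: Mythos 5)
Your proof is correct, but your key step is genuinely different from the paper's. For the vanishing $y=0$ the paper uses a one-line cup-product argument: the trace-zero hyperplane $T=\{\langle X\rangle : \tr(X)=0\}$ is disjoint from $D_0$ (a nonzero semidefinite matrix has nonzero trace), so $[T]\cdot[D_0]=h\cdot y=0$, and since multiplication by $h$ is injective on $H^{N-(4q-5)}(\bP_\bR,\bZ_2)$ inside $\bZ_2[h]/(h^{N+1})$, this forces $y=0$. Your sphere argument — trapping $D_0$ in the boundary $\partial(P\cC)\cong S^{N-1}$ of the projectivized positive semidefinite cone and using $H_{4q-5}(S^{N-1},\bZ_2)=0$ — exploits the same underlying positivity but through convexity rather than the trace functional, and it costs more machinery (invariance of domain, the ball structure of $P\cC$, functoriality of fundamental-class pushforwards) as well as the strict inequality $4q-5<N-1$, i.e.\ $(q-1)(q-3)>0$, which fails at $q=3$ though it holds throughout the paper's range $q\geq 5$; the paper's argument needs only $4q-5\geq 1$. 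In fact the two proofs are close in spirit: disjointness from $T$ means $D_0$ lies in the affine chart $\bP_\bR\setminus T\cong\bR^N$, a contractible set, so the paper's cup-product computation can also be read as a "trapping" argument, just with a cheaper trap. A genuine merit of your write-up is Step 1: you justify the additivity $x=y+z$ and the existence of $\bZ_2$-fundamental classes for the semialgebraic pieces $D_0$, $D_1$ (checking the overlap and singular loci have codimension $\geq 2$), a point the paper passes over with "Recall that $x=y+z$."
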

 \begin{proof} Recall that $x=y+z$. We want to    prove that $y=0.$ Consider   the hyperplane  $T$  in $ \bP_\bR$ whose points are the trace zero matrices,   
 $T:=\{ \langle X\rangle \in \bP_\bR :\  \tr  (X)=0\}.$  Then, since $D_0$ corresponds to semi-definite matrices, $T\cap D_0 =\emptyset.$ This implies that the cup-product $[T]\cdot[D_0]=h\cdot y=0$, hence $y=0 $ and consequently $z=x$.
   \end{proof}

 Now consider the identity matrix $I$ and the point $\bI=\langle I\rangle \in \bP_\bR\subset \bP.$
 We let $C_2$ be the cone over $D_2$ with vertex $\bI,$ $C_2=\{ \langle tX+sI\rangle : X\in M_2\hbox{ and } \langle t:s\rangle\in\bP^1(\bC)  \}$.

Then  $C_2$ is a variety of (complex) dimension $4q-4$ invariant under conjugation in 
$\bP$;  its real locus $C_2(\bR)$  is the union $C_1\cup C_0$  of two real cones with vertices at $\bI$ over, respectively, $D_1$ and $D_0$.
It is easy to verify  that  any  $\langle X\rangle \in C_1$ satisfies $m_X\leq 1.$

Furthermore, the degree of $C_2$ equals the degree of $D_2$ and therefore it is odd, for $q=2^k+1$.

\begin{lemma}\label{class} The cohomology class  $ [C_1] \in H^{N- (4q-4)}(\bP_\bR,\bZ_2)$
is non trivial, that is $[C_1]=h^{N- (4q-4)}$. In particular $C_1$ intersects any real projective subspace of codimension
$4q-4.$\end{lemma}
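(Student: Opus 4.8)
The plan is to determine the class of the whole real cone $C_2(\bR)$, to split it along the decomposition $C_2(\bR)=C_0\cup C_1$, and then to show that the semidefinite part $C_0$ carries the trivial class, so that the entire (nontrivial) class is concentrated on $C_1$. First I would record that $C_2$ is a complex subvariety of $\bP$ of complex dimension $4q-4$, invariant under conjugation, whose real locus $C_2(\bR)$ again has real dimension $4q-4$, being the real cone over the $(4q-5)$-dimensional set $D_2(\bR)$. Exactly as in the deduction of $x=h^{N-(4q-5)}$ from Proposition \ref{dispari}, the oddness of $\deg C_2=\deg D_2$ forces $[C_2(\bR)]$ to be nontrivial: a generic real linear subspace of complementary dimension meets the complex variety $C_2$ in $\deg C_2$ points, these occur in complex-conjugate pairs together with the real points lying on $C_2(\bR)$, so the number of real intersection points is $\equiv\deg C_2\equiv 1\pmod 2$. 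Hence $[C_2(\bR)]=h^{N-(4q-4)}$, and by additivity of the mod-$2$ fundamental class, $[C_2(\bR)]=[C_0]+[C_1]$.

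It therefore remains to prove $[C_0]=0$, for then $[C_1]=[C_2(\bR)]=h^{N-(4q-4)}$. Here the trace hyperplane $T$ cannot be reused as in the preceding lemma, since every hyperplane meets the positive-dimensional cone $C_0$ (the line joining $\bI$ to a point of $D_0$ always hits $T$). Instead I would use that $C_0$ is the cone with vertex $\bI$ over $D_0$, together with the fact that coning over a vertex lying off the secant locus preserves the degree. The vertex is admissible precisely because $q\ge 5$: since $I$ has rank $q\ge 5>4$, it cannot lie on the closure of a line spanned by two points of $D_0$, as any such combination is a hermitian matrix of rank $\le 4$; thus $\bI\notin D_0$ and central projection from $\bI$ is injective on $D_0$. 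Consequently $\deg C_0=\deg D_0$, so $C_0$ meets a generic complementary real linear subspace in an odd number of points iff $D_0$ does. By the previous lemma $[D_0]=0$, i.e.\ this number is even for $D_0$, hence even for $C_0$, and so $[C_0]=0$.

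Equivalently, one may argue by multiplication by $h$: the map $\,\cdot\,h\colon H^{N-(4q-4)}(\bP_\bR,\bZ_2)\to H^{N-(4q-5)}(\bP_\bR,\bZ_2)$ sends $h^{N-(4q-4)}$ to $h^{N-(4q-5)}\neq 0$ and is therefore injective, so it suffices to check $[C_0]\cdot h=0$; but $[C_0]\cdot h=[C_0\cap T]$, and $C_0\cap T$ is the injective image of $D_0$ under projection from $\bI$, so it carries the same (vanishing) class as $D_0$. With $[C_1]=h^{N-(4q-4)}$ in hand, the final assertion is immediate: for any real projective subspace $\Pi$ of codimension $4q-4$ one has $[\Pi]=h^{4q-4}$, whence $[C_1]\cdot[\Pi]=h^{N}\neq 0$ in $H^N(\bP_\bR,\bZ_2)$; if $C_1$ and $\Pi$ were disjoint this product would vanish, so $C_1$ must meet $\Pi$.

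The step I expect to be the main obstacle is the vanishing $[C_0]=0$. The delicate point is to justify, for these real (non-complex) varieties and for the very special, non-generic vertex $\bI$, that coning preserves the mod-$2$ degree, equivalently that $C_0\cap T$ and $D_0$ define the same mod-$2$ class. The rank bound $q\ge 5$ is exactly what guarantees that $\bI$ lies off the secant locus of $D_0$ and that the central projection stays injective, which is what legitimizes the degree comparison; isolating and verifying this transversality and injectivity cleanly is the crux of the argument.
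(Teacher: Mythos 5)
Your proof is correct and follows essentially the same route as the paper: write $[C_2(\bR)]=[C_0]+[C_1]$, deduce $[C_2(\bR)]=h^{N-(4q-4)}$ from the oddness of $\deg C_2$, and kill $[C_0]$ by relating it to the class $[D_0]=0$ from the preceding lemma. The one difference is that the paper simply asserts that the class of a cone vanishes if and only if the class of the underlying cycle does, whereas you justify this step by checking that $\bI$ lies off the secant locus of $D_0$ (any point on a secant has rank $\le 4 < q$), so that projection from $\bI$ is injective on $D_0$; this added care is genuinely warranted, since without some such hypothesis on the vertex the cone principle can fail (a cone over a null-homologous cycle taken from a vertex lying on its secants may carry a nonzero class).
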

\begin{proof} Write $[C_2(\bR)]=[C_0]+[C_1].$ Since the degree of $C_2$ is odd one has
$[C_2(\bR)]\neq 0.$  We note that the cohomology class associated to a cone over a cycle is zero if and only if the cycle is zero and so   $[C_0]=0.$  \end{proof}

\begin{proof} [Proof of Proposition \ref{spazio}]  
Let  $L\subset \mathcal H$ be a  vector subspace such  that
 every  $X\in L\setminus\{0\}$ has minimal inertia  $m_X>1.$  Denote by $\bL\subset \bP_\bR$ its associate projective space and assume by contradiction that  $c=\codim L< 4q-3$.
It follows that $h^c\cdot [C_2]\neq 0$ and therefore
 $\bL\cap C_1\neq \emptyset$ by Lemma \ref{class}.  This is a contradiction since for every $\langle  X \rangle\in C_1$ we have $m_X\leq 1.$
 \end{proof}

 \begin{proof}  [Proof  of Theorem \ref{teorema}]   The statement follows immediately from  the discussion in Section 2, Propositions \ref{inerzia} and  Proposition \ref{spazio}.
  \end{proof}

\section{Applications}
Theorem \ref{teorema} has some applications. The first one concerns the still mysterious surfaces satisfying $p_g=2q-3$ (see \cite{mp}).

\begin{proposition}\label{2q-3} Let  $S$ be a minimal surface of general type without irregular pencils of genus $\geq 2$ and satisfying $p_g=2q-3$.  If $q=2^k+1,  k\geq 3$, then the linear system $|K_S|$ has a fixed part. \end{proposition}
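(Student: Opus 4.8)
**

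The plan is to prove Proposition~\ref{2q-3} by contradiction, assuming that $|K_S|$ has no fixed part and deriving a contradiction with the inequality $h^{1,1}\geq 4q-3$ furnished by Theorem~\ref{teorema}. The idea is that freeness (or at least the absence of a fixed part) of the canonical system forces extra $(1,1)$-classes to coincide, or forces the cup-product map to degenerate, thereby pushing $h^{1,1}$ \emph{below} the bound $4q-3$, which is impossible.

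First I would translate the numerical hypotheses into Hodge-theoretic data. Since $p_g = 2q-3$ and $q = 2^k+1$ with $k\geq 3$ (so $q\geq 9$), inequality~(\ref{initialineq}), namely $h^{1,1}\geq p_g+q+1 = 3q-2$, is weaker than the bound $h^{1,1}\geq 4q-3$ we have from Theorem~\ref{teorema}. The key relation to exploit is the cup-product map $\bigwedge^2 H^{1,0}(S)\to H^{2,0}(S) = H^0(S,K_S)$, whose image is the space of holomorphic two-forms expressible as wedges of holomorphic one-forms. If $|K_S|$ has no fixed part, then the canonical map is controlled by the one-forms in a strong way, and I would invoke a Castelnuovo--de Franchis type or generic-vanishing argument to bound $p_g$ from below in terms of $q$. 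The point is that absence of a fixed part, combined with $p_g = 2q-3$ being small, should force the surface either to have an irregular pencil (excluded by hypothesis) or to satisfy a Hodge-number identity that collides with Theorem~\ref{teorema}.

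The main obstacle will be making precise the link between the fixed-part hypothesis and the Hermitian-matrix estimate. Concretely, I expect the argument to run as follows: if $|K_S|$ is free of a fixed component, the wedge map $\bigwedge^2 H^{1,0}\to H^{2,0}$ is surjective onto a space of dimension close to $\binom{q}{2}$, but $p_g = 2q-3$ forces this image to be tiny. The kernel of the wedge product then has large dimension, and each nonzero element of this kernel corresponds to a decomposable relation among the one-forms. Translating such relations into the Hermitian picture of Section~\ref{her}, these relations would produce matrices in $K(\bR)$ with minimal inertia $\leq 1$, contradicting Proposition~\ref{inerzia} unless $S$ carries an irregular pencil. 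The delicate step is ensuring that ``no fixed part'' genuinely yields enough decomposable relations to overwhelm the bound; this is where the precise numerics of $q = 2^k+1$ and the parity argument behind Proposition~\ref{spazio} must be brought to bear.

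In summary, the proof structure is: assume $|K_S|$ has no fixed part; use this together with $p_g = 2q-3$ to bound $h^{1,1}$ from above (via the geometry of the canonical system and the cup-product map); observe that this upper bound is strictly less than $4q-3$; and conclude the contradiction with Theorem~\ref{teorema}. The hard part will be the upper bound on $h^{1,1}$ under the fixed-part-free assumption, since Theorem~\ref{teorema} only provides a lower bound and one must extract a competing \emph{upper} estimate from the canonical geometry.
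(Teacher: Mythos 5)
Your overall strategy (assume no fixed part, contradict the lower bound $h^{1,1}\geq 4q-3$ of Theorem \ref{teorema}) matches the paper's logic, but the step that actually does the work is missing, and you say so yourself (``the hard part will be the upper bound on $h^{1,1}$''). Worse, the mechanism you sketch for that step would fail. Elements of the kernel of the wedge map $\bigwedge^2 H^{1,0}(S)\to H^{2,0}(S)$ are in general \emph{not} decomposable, so they do not produce Castelnuovo--de Franchis pencils; indeed, under the no-pencil hypothesis the Castelnuovo--de Franchis theorem says only that this kernel contains no nonzero decomposable element, while the kernel itself is large for big $q$ (its dimension is at least $\binom{q}{2}-(2q-3)$), with no contradiction arising. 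Moreover, such kernel elements correspond, in the matrix picture, to antisymmetric matrices in the kernel of $H^{2,0}(A)\to H^{2,0}(S)$, not to hermitian matrices in $K(\bR)=\ker\bigl(a^\ast_{\bR}\colon H^{1,1}(A)_\bR\to H^{1,1}(S)_\bR\bigr)$, so they cannot be fed into Proposition \ref{inerzia} at all. Finally, nothing in your sketch uses the minimality of $S$, which is essential, and no link between ``$|K_S|$ has no fixed part'' and any of these Hodge-theoretic objects is ever established.

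The paper's proof is instead a short numerical argument plus one external citation. From $p_g=2q-3$ one gets $\chi(S)=q-2$; combining $c_2=2-4q+2p_g+h^{1,1}$, Noether's formula $K_S^2=12\chi-c_2$, and Theorem \ref{teorema} yields $K_S^2\leq 8q-17<8q-16=8\chi(S)$. The bridge between this numerical fact and the fixed part of $|K_S|$ is \cite[Theorem 1.2]{mp} (resting on the index-type inequality of \cite{bnp}): for a minimal surface of general type with $p_g=2q-3$ and no irregular pencils of genus $\geq 2$, if $|K_S|$ had no fixed part then $K_S^2\geq 8\chi(S)$ would hold, contradicting the inequality above. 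If you wish to avoid that citation, you must reprove this statement, which is a substantial result in its own right and does not follow from the cup-product formalism of Section \ref{her}; as it stands, your proposal has a genuine gap exactly at this point.
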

\begin{proof}  One has $\chi(S)=q-2$ and so by  Theorem \ref{teorema},  $K_S^2<8\chi(S)$. The result then follows by  \cite [Theorem 1.2]{mp} (cf. also \cite{bnp}).  
 \end{proof}

\begin{remark}\rm{ The only known examples  of  surfaces of general type without irregular pencils of genus $\geq 2$ and satisfying $p_g=2q-3$  are the symmetric product of a genus 3 curve and the Schoen surface (\cite{Schoen}).  By \cite{mpp2}  no such surfaces exist for $q=5$  and by \cite {mpp} for $q\geq 6$ such surfaces will always have birational canonical map.  }
\end{remark}

We can obtain also a lower bound for  $h^{1,1}$ even when $q\neq 2^k+1$:

\begin{proposition}\label{h}  Let  $S$ be a  surface of general type without irregular pencils of genus $\geq 2$. If  $q=2^k+1+\epsilon$, with $0<\epsilon<2^k$, then  $h^{1,1}\geq 4q-3-4\epsilon$. \end{proposition}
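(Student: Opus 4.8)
The plan is to reduce the case $q=2^k+1+\epsilon$ to the already-proven case $q'=2^k+1$ by a linear-algebra restriction argument, mirroring the passage from Proposition \ref{spazio} to Theorem \ref{teorema}. Concretely, I want to show that the subspace $K(\bR)\subset\cH$ (the kernel of $a^{\ast}_\bR$, now sitting inside the $q\times q$ hermitian matrices) cannot be too large, by producing many $q'\times q'$ hermitian ``principal submatrices'' on which Proposition \ref{spazio} applies.

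First I would recall, from Section \ref{her}, that by Proposition \ref{inerzia} every nonzero $X\in K(\bR)$ satisfies $m_X>1$, i.e. $X$ has at least two positive and two negative eigenvalues. The key elementary observation is Cauchy's interlacing theorem: if $X$ is hermitian with inertia $(n_+,n_-)$ and $X'$ is a principal $q'\times q'$ submatrix obtained by deleting $q-q'=\epsilon$ rows and the corresponding columns, then the inertia of $X'$ still has $n_+'\ge n_+-\epsilon$ and $n_-'\ge n_--\epsilon$. Since here $\epsilon<2^k$, this interlacing alone is too weak to guarantee two positive and two negative eigenvalues survive, so a single fixed submatrix will not do. The remedy is to intersect $K(\bR)$ with the coordinate subspace of matrices supported on a chosen $q'$-element index set and then to choose that index set cleverly, exploiting that a nonzero $X$ has support large enough to force a favorable $q'\times q'$ block.

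The concrete step I would carry out: for each subset $J\subset\{1,\dots,q\}$ with $|J|=q'=2^k+1$, let $\cH_J\cong\cH(q',\bC)$ be the hermitian matrices supported on $J$ (zero outside the $J\times J$ block), and let $L_J=K(\bR)\cap\cH_J$, viewed inside $\cH(q',\bC)$ via the obvious inclusion. Every nonzero element of $L_J$ is a $q'\times q'$ hermitian matrix lying in $K(\bR)$, hence has $m_X>1$ by Proposition \ref{inerzia}; therefore Proposition \ref{spazio} applies to $L_J$ and yields $\dim L_J\le q'^2-(4q'-3)=q'^2-4q'+3$. Now $\codim(\cH_J\ \text{in}\ \cH)=q^2-q'^2$, so $\codim_{\cH}L_J=\codim K(\bR)+\dim K(\bR)-\dim L_J$ combined with the codimension bound for $L_J$ inside $\cH_J$ gives
\begin{equation}
\dim K(\bR)=\dim K(\bR)\cap\cH_J+\codim_{\cH_J}\bigl(K(\bR)\cap\cH_J\bigr)\ \text{counting},
\end{equation}
so that $\dim K(\bR)\le \dim L_J+(q^2-q'^2)\le (q^2-q'^2)+q'^2-(4q'-3)=q^2-4q'+3=q^2-(4q-3-4\epsilon)$, using $4q'-3=4(q-\epsilon)-3=4q-3-4\epsilon$. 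Feeding this into $h^{1,1}\ge q^2-\dim K$ from Section \ref{her} gives exactly $h^{1,1}\ge 4q-3-4\epsilon$.

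The main obstacle, and the step requiring care, is the transversality/dimension-count in the previous paragraph: the naive inequality $\dim L_J\le\dim L_J$ only bounds $\dim K(\bR)$ by $\dim L_J+\codim_{\cH}\cH_J$ if one has $\dim K(\bR)\le\dim L_J+(q^2-q'^2)$, which holds automatically because projecting $\cH\to\cH_J$ (forgetting the entries outside the block) has kernel of dimension $q^2-q'^2$ and $L_J$ is precisely the intersection of $K(\bR)$ with that kernel's complement in the relevant sense — more precisely $\dim K(\bR)-\dim L_J\le \codim(\cH_J\subset\cH)$ since $L_J=K(\bR)\cap\cH_J$. This is the standard inequality $\dim(U\cap W)\ge\dim U+\dim W-\dim\cH$ rearranged, and no choice of $J$ is even needed: a single $J$ suffices. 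I would double-check the arithmetic $q'^2-4q'+3+(q^2-q'^2)=q^2-(4q-3-4\epsilon)$ and confirm the hypothesis range $0<\epsilon<2^k$ guarantees $q'=2^k+1\ge 5$ so that Proposition \ref{spazio} is legitimately invoked.
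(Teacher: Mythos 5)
Your proof is correct and is essentially the paper's own argument: the paper takes the single coordinate block $V=\{(x_{i,j})\in\cH : x_{i,j}=0 \hbox{ for } \max(i,j)>2^k+1\}$, identifies it with the $(q-\epsilon)\times(q-\epsilon)$ hermitian matrices, applies Proposition \ref{spazio} to $K(\bR)\cap V$ (legitimate since zero-padding does not change the inertia, so Proposition \ref{inerzia} still gives $m_X>1$ there), and concludes via $\dim K(\bR)\le \dim(K(\bR)\cap V)+\codim V$ --- exactly your $L_J$ argument with $J=\{1,\dots,2^k+1\}$, including your observation that a single $J$ suffices. One pedantic point, which the paper shares: $0<\epsilon<2^k$ only forces $k\ge 1$, not $k\ge 2$, so for $k=1$ (i.e.\ $q=4$, $q'=3$) one must note that the conclusion of Proposition \ref{spazio} holds trivially for $q'=3$, since no nonzero $3\times 3$ hermitian matrix can have two positive and two negative eigenvalues, whence $L=\{0\}$ and the bound $\dim L\le q'^2-(4q'-3)=0$ still holds.
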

\begin{proof}  Keep the notation of Section 2 and take a decomposition $\mathcal H=V\oplus W$, with $V=\{(x_{i,j})\in \mathcal H : x_{i,j}=0,\ \max(i,j) > 2^k+1\}$ and  $W$ any complementary subspace; since $V$ is naturally identified with the space of $(q-\epsilon)\times (q-\epsilon)$ hermitian matrices and $K(\bR)\subset \mathcal H$, then $\dim K(\bR)\leq \dim (K(\bR)\cap V) + \dim W \leq [(q-\epsilon)^2-(4(q-\epsilon) -3)]+[q^2-(q-\epsilon)^2] =q^2-(4q-3-4\epsilon).$   \end{proof}

\end{document}